\tikzset{
every node/.style={circle, draw, inner sep=2pt},
every picture/.style={thick}
}
\newtheorem{theorem}{Theorem}
\newtheorem{lemma}[theorem]{Lemma}
\newtheorem{proposition}[theorem]{Proposition}
\newtheorem{corollary}[theorem]{Corollary}
\theoremstyle{definition}
\newtheorem{definition}[theorem]{Definition}
\newtheorem{observation}[theorem]{Observation}
\newtheorem{remark}[theorem]{Remark}
\newtheorem{example}[theorem]{Example}
\newtheorem{question}[theorem]{Question}
\newenvironment{thm}{\begin{theorem}}{\end{theorem}}
\newenvironment{lem}{\begin{lemma}}{\end{lemma}}
\newenvironment{cor}{\begin{corollary}}{\end{corollary}}
\def \mr {\operatorname{mr}}
\def \rank {\operatorname{rank}}
\def \S {\mathcal{S}}
\newcommand{\mz}{\operatorname{mz}}
\title{Graph classes for critical ideals, minimum rank and zero forcing number}
\author{Carlos~A.~Alfaro \thanks{Banco de M\'exico, Mexico City, Mexico (alfaromontufar@gmail.com, carlos.alfaro@banxico.org.mx).}
}
\begin{document}

\maketitle

\begin{abstract}
Recently, there have been found new relations between the zero forcing number and the minimum rank of a graph  with the algebraic co-rank.
We continue on this direction by giving a characterization of the graphs with real algebraic co-rank at most 2.
This implies that for any graph with at most minimum rank at most 3, its minimum rank is bounded from above by its real algebraic co-rank.
\end{abstract}

\noindent
\textbf{Keywords:}
critical ideals, algebraic co-rank, forbidden induced subgraph, minimum rank, Laplacian matrix, zero forcing number.

\noindent
\textbf{MSC:} 
05C25, 05C50, 05E99, 13P15, 15A03, 68W30.

\section{Introduction}
Given a graph $G$ and a set of indeterminates $X_G=\{x_u \, : \, u\in V(G)\}$,
the {\it generalized Laplacian matrix} $L(G,X_G)$ of $G$ is the matrix whose $uv$-entry is given by
\[
L(G,X_G)_{uv}=\begin{cases}
x_u& \text{ if } u=v,\\
-m_{uv}& \text{ otherwise},
\end{cases}
\]
where $m_{uv}$ is the number of the edges between vertices $u$ and $v$.
Moreover, if $\mathcal{R}[X_G]$ is the polynomial ring over a commutative ring $\mathcal{R}$ with unity in the variables $X_G$, then the {\it critical ideals} of $G$ are the determinantal ideals given by
\[
I^{\mathcal{R}}_i(G,X_G)=\langle {\rm minors}_i(L(G,X_G))\rangle\subseteq \mathcal{R}[X_G] \text{ for all } 1\leq i\leq n,
\]
where $n$ is the number of vertices of $G$ and ${\rm minors}_i(L(G,X_G))$ is the set of the determinants of the $i\times i$ submatrices of $L(G,X_G)$.

An ideal is said to be {\it trivial} if it is equal to $\langle1\rangle$ ($=\mathcal{R}[X]$).
The {\it algebraic co-rank} $\gamma_\mathcal{R}(G)$ of $G$ is the maximum integer $i$ for which $I^{\mathcal{R}}_i(G,X_G)$ is trivial.
For simplicity, we might refer to the {\it real algebraic co-rank} to $\gamma_\mathbb{R}(G)$. 
Note that $I^{\mathcal{R}}_n(G,X_G)=\langle \det L(G,X_G)\rangle$ is always non-trivial, and if $d_G$ denote the degree vector, then $I^{\mathcal{R}}_n(G,d_G)=\langle 0\rangle$.

Critical ideals were defined in \cite{corrval} and some interesting properties were pointed out there.
For instance, it was proven that if $H$ is an induced subgraph of $G$, then $I^{\mathcal{R}}_i(H,X_H)\subseteq I^{\mathcal{R}}_i(G,X_G)$  for all $i\leq |V(H)|$.
Thus $\gamma_\mathcal{R}(H)\leq \gamma_\mathcal{R}(G)$.
Initinally, critical ideals were defined as a generalization of the critical group, {\it a.k.a.}\/ sandpile group, see \cite{alfacorrval, alfaval, corrval}.
In \cite{alfaval2,merino} can be found an account of the main results on sandpile group.
Further, it is also a generalization of several other algebraic objects like Smith group or characteristic polynomials of the adjacency and Laplacian matrices, see \cite[Section 4]{alfavalvaz} and \cite[Section 3.3]{corrval}.
In \cite{alflin}, there were explered its relation with the zero forcing number and the minimum rank.
We continue on this direction.
For this, we recall these well-known concepts.

The \emph{zero forcing game} is a color-change game where vertices can be blue or white.
At the beginning, the player can pick a set of vertices $B$ and color them blue while others remain white.
The goal is to color all vertices blue through repeated applications of the \emph{color change rule}: If $x$ is a blue vertex and $y$ is the only white neighbor of $x$, then $y$ turns blue, denoted as $x\rightarrow y$.
An initial set of blue vertices $B$ is called a \emph{zero forcing set} if starting with $B$ one can make all vertices blue.
The \emph{zero forcing number} $Z(G)$ is the minimum cardinality of a zero forcing set.
The \emph{chronological list} of a zero forcing game records the forces $x_i\rightarrow y_i$ in the order of performance.
In the following, $\mz(G)=|V(G)|-Z(G)$.

For a graph $G$ on $n$ vertices, the family $\S_\mathcal{R}(G)$ collects all $n\times n$ symmetric matrices with entries in the ring $\mathcal{R}$, whose $i,j$-entry ($i\neq j$) is nonzero whenever $i$ is adjacent to $j$ and zero otherwise.
Note that the diagonal entries can be any element in the ring $\mathcal{R}$.  
The \emph{minimum rank} $\mr_\mathcal{R}(G)$ of $G$ is the smallest possible rank among matrices in $\S_\mathcal{R}(G)$.
Here we follow \cite[Definition 1]{rankring} and define the rank of a matrix over a commutative ring with unity as the largest $k$ such that there is a nonzero $k\times k$ minor that is not a zero divisor.  In the case of $\mathcal{R}=\mathbb{Z}$, the rank over $\mathbb{Z}$ is the same as the rank over $\mathbb{R}$.

In \cite{AIMZmr}, it was proved that $\mz(G)\leq\mr_\mathcal{R}(G)$ for any field $\mathcal{R}$.
And in \cite{alflin}, it was proved that $\mz(G)\leq\gamma_\mathcal{R}(G)$ for any commutative ring $\mathcal{R}$ with unity.
However, the relation between $\mr_\mathcal{R}(G)$ and $\gamma_\mathcal{R'}(G)$ depends on the rings $\mathcal{R}$ and $\mathcal{R}'$.

Let $I\subseteq \mathcal{R}[X]$ be an ideal in $\mathcal{R}[X]$.
The \emph{variety} of $I$ is defined as
\[
V_\mathcal{R}(I)=\left\{ {\bf a}\in \mathcal{R}^n : f({\bf a}) = 0 \text{ for all } f\in I \right\}.
\]
That is, $V_\mathcal{R}(I)$ is the set of common roots between polynomials in $I$.
We have that
\[
\langle 1\rangle \supseteq I^{\mathcal{R}}_1(G,X_G) \supseteq \cdots \supseteq I^{\mathcal{R}}_n(G,X_G) \supseteq \langle 0\rangle.
\]
Thus
\[
\emptyset=V_\mathcal{R}(\langle 1\rangle) \subseteq V_\mathcal{R}(I^{\mathcal{R}}_1(G,X_G)) \subseteq \cdots \subseteq V_\mathcal{R}(I^{\mathcal{R}}_n(G,X_G)) \subseteq V_\mathcal{R}(\langle 0\rangle)=\mathcal{R}^n.
\]
If $I^\mathcal{R}_k(G,X_G)$ is trivial, then, for all ${\bf a}\in \mathcal{R}^n$, there are $k$-minors of $L(G,{\bf a})$ which are different of 0, and $\rank(L(G,{\bf a}))\geq k$.
However, it does not imply that $\mr_\mathcal{R}(G)\geq \gamma_\mathcal{R}(G)$, since matrices in $\S_\mathcal{R}(G)$ do not necessarily have only $0$ and $-1$ on the off-diagonal entries.
However, if $V_\mathcal{R}(I^{\mathcal{R}}_k(G,X_G))\neq\emptyset$ for some $k$, then there exists ${\bf a}\in\mathcal{R}$ such that, for all $t \geq k$, $I^{\mathcal{R}}_{t}(G,{\bf a})=\langle 0\rangle$; that is, all $t$-minors of $L(G,{\bf a})$ are equal to $0$.
Therefore, $\mr_\mathcal{R}(G)\leq k-1$.
In particular, if $V_\mathcal{R}\left(I^{\mathcal{R}}_{\gamma_\mathcal{R}(G)+1}(G,X_G)\right)$ is not empty, then $\mr_\mathcal{R}(G)\leq \gamma_\mathcal{R}(G)$.
Therefore, as noted in \cite{alflin}, it follows by the Weak Nullstellensatz that if $\mathcal{R}$ is an algebraically closed field, then $\mr_\mathcal{R}(G)\leq \gamma_\mathcal{R}(G)$.
That is not the case for the integers, there exist graphs for which $\mr_\mathbb{Z}(G)> \gamma_\mathbb{Z}(G)$.
For the field of real numbers, it was conjectured \cite{alflin} that $\mr_\mathbb{R}(G)\leq\gamma_\mathbb{R}(G)$.
Trying to sheed some light on this conjecture, it was proved in \cite{alflin} that if $G$ is a connected graph such that $\mr_{\mathbb{R}}(G)\leq 2$, then $\mr_{\mathbb{R}}(G)\leq\gamma_{\mathbb{R}}(G)$.

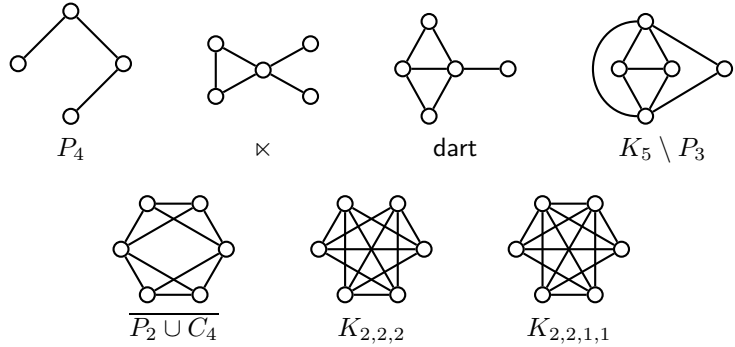
\begin{figure}[h]
\begin{center}
\begin{tabular}{c@{\extracolsep{10mm}}c@{\extracolsep{10mm}}c@{\extracolsep{10mm}}c@{\extracolsep{10mm}}c}
\begin{tikzpicture}[rotate=-90,scale=.7]
	\tikzstyle{every node}=[minimum width=0pt, inner sep=2pt, circle]
\draw (180:1) node (v1) [draw] {};
		\draw (270:1) node (v3) [draw] {};
		\draw (360:1) node (v2) [draw] {};
		\draw (450:1) node (v4) [draw] {};
		\draw (v1) -- (v3);
		\draw (v1) -- (v4);
		\draw (v2) -- (v4);
	\end{tikzpicture}
&
	\begin{tikzpicture}[rotate=-90,scale=.7]
	\tikzstyle{every node}=[minimum width=0pt, inner sep=2pt, circle]
	\draw (-.5,-.9) node (v1) [draw] {};
	\draw (.5,-.9) node (v2) [draw] {};
	\draw (0,0) node (v3) [draw] {};
	\draw (-.5,.9) node (v4) [draw] {};
	\draw (.5,.9) node (v5) [draw] {};
	\draw (v1) -- (v2);
	\draw (v1) -- (v3);
	\draw (v2) -- (v3);
	\draw (v3) -- (v4);
	\draw (v3) -- (v5);
    \node[draw=none] at (0.9,0) {};
	\end{tikzpicture}
&
	\begin{tikzpicture}[scale=.7]
	\tikzstyle{every node}=[minimum width=0pt, inner sep=2pt, circle]
	\draw (-.5,0) node (v2) [draw] {};
	\draw (0,-.9) node (v1) [draw] {};
	\draw (.5,0) node (v3) [draw] {};
	\draw (1.5,0) node (v5) [draw] {};
	\draw (0,.9) node (v4) [draw] {};
	\draw (v1) -- (v2);
	\draw (v1) -- (v3);
	\draw (v2) -- (v3);
	\draw (v2) -- (v4);
	\draw (v3) -- (v4);
	\draw (v3) -- (v5);
	\end{tikzpicture}
    &
	\begin{tikzpicture}[scale=.7]
	\tikzstyle{every node}=[minimum width=0pt, inner sep=2pt, circle]
	\draw (-.5,0) node (v2) [draw] {};
	\draw (0,-.9) node (v1) [draw] {};
	\draw (.5,0) node (v3) [draw] {};
	\draw (1.5,0) node (v5) [draw] {};
	\draw (0,.9) node (v4) [draw] {};
	\draw (v1) -- (v2);
	\draw (v1) -- (v3);
	\draw (v2) -- (v3);
	\draw (v2) -- (v4);
	\draw (v3) -- (v4);
	\draw (v4) -- (v5);
    \draw (v1) -- (v5);
    \draw (v4) to [out=180,in=90] ($(v2)+(-0.5,0)$) to [out=-90, in=180 ] (v1);
	\end{tikzpicture}
\\
$P_4$
&
$\ltimes$
&
{\sf dart}
&
$K_5\setminus{P_3}$ \\
\end{tabular}
\end{center}

\begin{center}
\begin{tabular}{c@{\extracolsep{10mm}}c@{\extracolsep{10mm}}c@{\extracolsep{10mm}}c@{\extracolsep{10mm}}c}
\begin{tikzpicture}[scale=.7]
	\tikzstyle{every node}=[minimum width=0pt, inner sep=2pt, circle]
	\draw (180:1) node (v6) [draw] {};
		\draw (240:1) node (v1) [draw] {};
		\draw (300:1) node (v3) [draw] {};
		\draw (360:1) node (v5) [draw] {};
		\draw (420:1) node (v4) [draw] {};
		\draw (480:1) node (v2) [draw] {};
		\draw (v1) -- (v3);
		\draw (v1) -- (v5);
		\draw (v1) -- (v6);
		\draw (v2) -- (v4);
		\draw (v2) -- (v5);
		\draw (v2) -- (v6);
		\draw (v3) -- (v5);
		\draw (v3) -- (v6);
		\draw (v4) -- (v5);
		\draw (v4) -- (v6);
	\end{tikzpicture}
&
	\begin{tikzpicture}[scale=.7]
	\tikzstyle{every node}=[minimum width=0pt, inner sep=2pt, circle]
	\draw (180:1) node (v1) [draw] {};
		\draw (240:1) node (v2) [draw] {};
		\draw (300:1) node (v3) [draw] {};
		\draw (360:1) node (v4) [draw] {};
		\draw (420:1) node (v5) [draw] {};
		\draw (480:1) node (v6) [draw] {};
		\draw (v1) -- (v3);
		\draw (v1) -- (v4);
		\draw (v1) -- (v5);
		\draw (v1) -- (v6);
		\draw (v2) -- (v3);
		\draw (v2) -- (v4);
		\draw (v2) -- (v5);
		\draw (v2) -- (v6);
		\draw (v3) -- (v5);
		\draw (v3) -- (v6);
		\draw (v4) -- (v5);
		\draw (v4) -- (v6);
	\end{tikzpicture}
    &
	\begin{tikzpicture}[scale=.7]
	\tikzstyle{every node}=[minimum width=0pt, inner sep=2pt, circle]
	\draw (180:1) node (v1) [draw] {};
		\draw (240:1) node (v2) [draw] {};
		\draw (300:1) node (v3) [draw] {};
		\draw (360:1) node (v4) [draw] {};
		\draw (420:1) node (v5) [draw] {};
		\draw (480:1) node (v6) [draw] {};
		\draw (v1) -- (v3);
		\draw (v1) -- (v4);
		\draw (v1) -- (v5);
		\draw (v1) -- (v6);
		\draw (v2) -- (v3);
		\draw (v2) -- (v4);
		\draw (v2) -- (v5);
		\draw (v2) -- (v6);
		\draw (v3) -- (v5);
		\draw (v3) -- (v6);
		\draw (v4) -- (v5);
		\draw (v4) -- (v6);
		\draw (v5) -- (v6);
	\end{tikzpicture}
\\
$\overline{P_2\cup C_4}$
&
$K_{2,2,2}$
&
$K_{2,2,1,1}$
\end{tabular}
\end{center}
\caption{Forbidden graphs for $\Gamma_{\leq2}^{\mathbb{R}}$.}
\label{fig2}
\end{figure}

Let $\Gamma^\mathcal{R}_{\leq i}=\{G\, :\, G \text{ is a simple connected graph with } \gamma_\mathcal{R}(G)\leq i\}$.
Our aim is to give a characterization of $\Gamma^\mathbb{R}_{\leq 2}$.
Given a family of graphs $\mathfrak{F}$, a graph $G$ is called $\mathfrak{F}$-{\it free} if no induced subgraph of $G$ is isomorphic to a member of $\mathfrak{F}$.
We will characterize $\Gamma^\mathbb{R}_{\leq 2}$ as the $\{P_4,\ltimes,{\sf dart},K_5\setminus{P_3},\overline{P_2\cup C_4},K_{2,2,2},K_{2,2,1,1}\}$-free graphs.
Since $\mr_{\mathbb{R}}(G)\leq 2$ if and only if $G$ is \{$P_4$,$K_{3,3,3}$,$\ltimes$,{\sf dart}\}-free, then we have that any graph $G\in\Gamma^\mathbb{R}_{\leq 2}$ has $\mr_{\mathbb{R}}(G)\leq 2$.
Thus, if $G$ is a connected graph such that $\mr_{\mathbb{R}}(G)= 3$, then $\gamma(G)\geq3$.
Implying that if $G$ is a connected graph such that $\mr_{\mathbb{R}}(G)\leq 3$, then $\mr_{\mathbb{R}}(G)\leq\gamma_{\mathbb{R}}(G)$.

The paper is organized as follows.
In Section~\ref{section:graphclasses}, we will give an overview of the main classifications that have been obtained for graphs with bounded $\mz$, $\mr$ and $\gamma$.
We will give a characterization of the $\{P_4, \ltimes, {\sf dart}, K_5\setminus{P_3}, \overline{P_2\cup C_4}, K_{2,2,2}, K_{2,2,1,1}\}$-free graphs.
In Section~\ref{section:blowupgraphs}, we will recall a method to compute the algebraic co-rank of blowup graphs.
And we will use it to prove that in fact the given characterization is of the graphs with real minimum rank at most 2.

\section{Graph classes for bounded $\mz$, $\mr$ and $\gamma$}\label{section:graphclasses}

It is known that algebraic co-rank, minimum rank and $\mz$ are monotone on induced subgraphs, that is, if $H$ is an induced subgraph of $G$, then $\gamma_\mathcal{R}(H)\leq \gamma_\mathcal{R}(G)$, $\mr_\mathcal{R}(H)\leq \mr_\mathcal{R}(G)$ and $\mz(H)\leq \mz(G)$.
Then, it is natural to ask for classifications of graphs where these parameters are bounded from above.

Since $\mz(G)\leq \gamma_{\mathcal{R}}(G)$ and $\mz(G)\leq \mr_{\mathcal{R}}(G)$, then the family of graphs with $\gamma_{\mathcal{R}}(G)\leq k$ or $\mr_{\mathcal{R}}(G)\leq k$ are contained in the family of graphs with $\mz(G)\leq k$.
However, the relation between the families of graphs with $\gamma_{\mathcal{R}}(G)\leq k$ and $\mr_{\mathcal{R}}(G)\leq k$ is still not clear.

In previous works, it was noticed in \cite{alfaval,BHL04} that among all connected graphs, the complete graphs are the only graphs whose minimum rank, algebraic co-rank and $\mz$ are equal to 1.
Also, in \cite[Theorem 16]{BHL04} it was proved that for any connected graph $G$, $\mz(G)\leq2$ if and only if $G$ is $\{P_4,\ltimes,{\sf dart}\}$-free.
In \cite{BHL04,BHL05}, there are classifications of graphs whose minimum rank is at most 2 depending on the base field.
In particular for the field of real numbers, we have the following result, where $G+H$ denote the {\it disjoint union} of the graphs of $G$ and $H$, and $G\vee H$ denote the {\it join} of $G$ and $H$.
\begin{thm}\cite{BF07,BHL04}
    Let $G$ be a connected graph.
    Then, the following are equivalent:
    \begin{enumerate}
        \item $\mr(G)\leq 2$,
        \item $G$ is \{$P_4$,$K_{3,3,3}$,$\ltimes$,{\sf dart}\}-free,
        \item $G=\bigvee_{i=1}^r G_i$, $r>1$, where either
            \begin{enumerate}
                \item $G_i=K_{m_i}+ K_{n_i}$ for suitable $m_i\geq 1$, $n_i\geq0$, or
                \item $G_i=\overline{K_{m_i}}$ for a suitable $m_i\geq3$;
            \end{enumerate}
            and option $(b)$ occours at most twice.
    \end{enumerate}
\end{thm}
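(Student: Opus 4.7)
\medskip
\noindent\textbf{Proof plan.}
I would prove the cycle of implications $(1)\Rightarrow(2)\Rightarrow(3)\Rightarrow(1)$.

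For $(1)\Rightarrow(2)$: because $\mr_{\mathbb{R}}$ is monotone under induced subgraphs, it suffices to verify $\mr_{\mathbb{R}}(H)\geq 3$ for each $H\in\{P_4,\ltimes,{\sf dart},K_{3,3,3}\}$. For $P_4$ on vertices $1,2,3,4$ with edges $12,23,34$, any $A\in\S_{\mathbb{R}}(P_4)$ has $A_{13}=A_{14}=A_{24}=0$ and $A_{12},A_{23},A_{34}\neq 0$; expanding either the leading or trailing $3\times 3$ principal minor contradicts $\rank A\leq 2$. The other three forbidden graphs are handled by the same kind of direct minor check.

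For $(3)\Rightarrow(1)$: I would exhibit an explicit symmetric matrix $A=xx^{\top}\pm yy^{\top}\in\S_{\mathbb{R}}(G)$ of rank at most~$2$ by assigning each vertex a label in $\RR^{2}$. For a type-(a) factor $G_i=K_{m_i}+K_{n_i}$, give its two cliques two constant labels chosen to pair to zero in the indicated bilinear form, enforcing the non-edges between the cliques while keeping within-clique entries nonzero. For each type-(b) factor $\overline{K_{m_i}}$, give all of its vertices a common label that is isotropic for the form, zeroing out the within-factor pairs. Taking the minus sign makes the form indefinite, which provides exactly two independent isotropic directions in $\RR^{2}$; this is precisely why at most two type-(b) factors can be accommodated. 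Labels across distinct factors are chosen to pair non-trivially, giving nonzero entries on all join edges.

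For $(2)\Rightarrow(3)$: a $P_4$-free connected graph on at least two vertices is a cograph, so it decomposes as $G=G_1\vee\cdots\vee G_r$ with $r\geq 2$ in which each $G_i$ is either a single vertex or disconnected. The pivotal observation is $\ltimes=K_1\vee(K_2+2K_1)$ and ${\sf dart}=K_1\vee(P_3+K_1)$: if any $G_i$ contains an induced $K_2+2K_1$ or $P_3+K_1$, then joining with any vertex of another $G_j$ produces the forbidden subgraph inside $G$. Hence each $G_i$ is simultaneously $(K_2+2K_1)$- and $(P_3+K_1)$-free. A case analysis on the number of components of $G_i$ (a disconnected $(P_3+K_1)$-free graph whose components have $\geq 2$ vertices has all components complete; a $(K_2+2K_1)$-free graph with $\geq 3$ components is edgeless) forces $G_i$ to be $K_{m_i}+K_{n_i}$ or $\overline{K_{m_i}}$. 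Finally, $K_{3,3,3}=\overline{K_3}\vee\overline{K_3}\vee\overline{K_3}$, which rules out three simultaneous type-(b) factors.

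The main obstacle is $(2)\Rightarrow(3)$: translating the four forbidden induced subgraphs into rigid structural constraints on every join-factor $G_i$ requires careful case work, since a forbidden graph may sit entirely inside one $G_i$ or straddle several factors. Ensuring that the component count and the component shape of every $G_i$ match exactly the claimed list, and that global $K_{3,3,3}$-freeness correctly caps the number of type-(b) factors at two, is where the combinatorial effort concentrates.
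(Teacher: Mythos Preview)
The paper does not prove this theorem; it is quoted from \cite{BF07,BHL04} and used as a black box. Your plan is correct and is essentially the argument given in those references: the cograph join decomposition combined with the identifications $\ltimes=K_1\vee(K_2+2K_1)$ and ${\sf dart}=K_1\vee(P_3+K_1)$ is exactly how \cite{BHL04} derives the structure in $(2)\Rightarrow(3)$, and the rank-$2$ realization $A_{uv}=B(\phi(u),\phi(v))$ for an indefinite form $B$ on $\RR^2$ (isotropic labels for the $\overline{K_{m_i}}$ factors, anisotropic $B$-orthogonal pairs for the $K_{m_i}+K_{n_i}$ factors) is the construction behind $(3)\Rightarrow(1)$ there. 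The only points to tidy in a full write-up are the boundary case $|V(G)|=1$ (where $r>1$ fails formally) and, in $(3)\Rightarrow(1)$, making explicit that over $\RR$ one can choose the finitely many label parameters generically so that all cross-factor pairings are nonzero.
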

On the other hand, we have that if $\mathcal{R}'$ is a subring of $\mathcal{R}$, then $\gamma_\mathcal{R'}(G)\leq\gamma_\mathcal{R}(G)$.
From which follows $\Gamma^{\mathbb{R}}_{\leq k}\subseteq\Gamma^{\mathbb{Z}}_{\leq k}$.
In this sense, in \cite{alfaval} the connected graphs with $\gamma_{\mathbb{Z}}(G)\leq 2$ were classified.

\begin{thm}\label{thm:chacterizationZ2}
    Let $G$ be a connected graph.
    Then, the following are equivalent:
    \begin{enumerate}
        \item $G\in\Gamma^{\mathbb{Z}}_{\leq 2}$,
        \item \{$P_4$, $K_{2,2,1,1}$, $K_5\setminus{P_3}$, $\ltimes$, ${\sf dart}$\}-free graphs,
        \item $G$ is isomorphic to $K_{n_1,n_2,n_3}$ or to $\overline{K_{n_1}}\vee(K_{n_2}+K_{n_3})$.
    \end{enumerate}
\end{thm}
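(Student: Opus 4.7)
My plan is to establish the three-way equivalence via the cycle $(1)\Rightarrow(2)\Rightarrow(3)\Rightarrow(1)$. For $(1)\Rightarrow(2)$ I argue the contrapositive. By monotonicity of critical ideals under induced subgraphs, $I_i^{\mathcal{R}}(H,X_H)\subseteq I_i^{\mathcal{R}}(G,X_G)$, so it suffices to verify $I_3^{\mathbb{Z}}(F,X_F)=\langle 1\rangle$ for each forbidden $F\in\{P_4,K_{2,2,1,1},K_5\setminus P_3,\ltimes,{\sf dart}\}$. Each such $F$ has at most six vertices, so for each one I would write out $L(F,X_F)$ and exhibit an explicit $\mathbb{Z}$-linear combination of $3\times 3$ minors that equals $1$; this is a finite symbolic verification, done one graph at a time.

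For $(3)\Rightarrow(1)$ I need to show $I_3^{\mathbb{Z}}(G,X_G)\neq\langle 1\rangle$ for $G$ in either family. My strategy is to exhibit a single specialization $x_u\mapsto a_u$ at which every $3\times 3$ minor of $L(G,X_G)$ vanishes; then $I_3^{\mathbb{Z}}(G,X_G)$ is contained in the maximal ideal $\langle x_u-a_u : u\in V(G)\rangle$ and is therefore proper. A natural candidate for the specialization comes from the partition structure: for $K_{n_1,n_2,n_3}$ one assigns $a_u$ depending only on the part containing $u$, and similarly for $\overline{K_{n_1}}\vee(K_{n_2}+K_{n_3})$. I would verify that these choices force $\rank L(G,a)\leq 2$ directly by row-reducing $L(G,a)$, which breaks into a few symmetric blocks determined by the partition.

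The structural implication $(2)\Rightarrow(3)$ is the core of the argument. Since $G$ is $P_4$-free it is a cograph, and being connected it decomposes canonically as a join $G=H_1\vee\cdots\vee H_r$ with each $H_i$ either a single vertex or a disconnected cograph. The hypothesis that $G$ is also $\{\ltimes,{\sf dart}\}$-free (which characterizes $\mz(G)\leq 2$ by \cite[Theorem 16]{BHL04}) forces each disconnected $H_i$ to be of the form $K_a+\overline{K_b}$. I would then perform a case analysis on the number and forms of the join factors, using $K_{2,2,1,1}$- and $(K_5\setminus P_3)$-avoidance as the final obstructions: four or more empty factors immediately produce an induced $K_{2,2,1,1}$; a disconnected factor with three nontrivial components produces an induced $K_5\setminus P_3$; two simultaneously disconnected join factors already yield one of the two forbidden six-vertex graphs. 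After these exclusions only two configurations remain, matching exactly $K_{n_1,n_2,n_3}$ and $\overline{K_{n_1}}\vee(K_{n_2}+K_{n_3})$. The \textbf{main obstacle} is precisely this case analysis: enumerating every admissible join configuration and pinpointing which forbidden subgraph appears when the structure deviates from the two target shapes. The determinantal verifications in $(1)\Rightarrow(2)$ and $(3)\Rightarrow(1)$, while tedious, are essentially mechanical.
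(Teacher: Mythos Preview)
The paper does not prove Theorem~\ref{thm:chacterizationZ2}; it is quoted verbatim as a known result from \cite{alfaval} and is used only as input to the subsequent Lemma~\ref{lemma:forb}. Hence there is no proof in this paper to compare your proposal against.

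As for the content of your sketch: the overall cycle $(1)\Rightarrow(2)\Rightarrow(3)\Rightarrow(1)$ and the strategies for $(1)\Rightarrow(2)$ (compute $I_3^{\mathbb Z}$ for each of the five small graphs) and for $(3)\Rightarrow(1)$ (exhibit an integer specialization killing all $3$-minors) are sound and are essentially what is done in \cite{alfaval}, though the present paper's treatment of the analogous $(3)\Rightarrow(1)$ step over $\mathbb R$ (Lemmas~\ref{lemma:gamma1} and~\ref{lemma:gamma2}) uses the blowup machinery of Theorem~\ref{Theo:trivialcriticalidealsiff} rather than a direct rank computation. Your $(2)\Rightarrow(3)$ outline has the right shape but some of the stated intermediate claims are inaccurate: the disconnected join factors in a $\{P_4,\ltimes,{\sf dart}\}$-free graph are unions of two \emph{cliques} $K_a+K_b$ (cf.\ the $\mr\leq 2$ theorem just above), not $K_a+\overline{K_b}$ as you wrote; and your descriptions of which configurations produce $K_{2,2,1,1}$ and $K_5\setminus P_3$ need tightening (e.g.\ ``four or more empty factors'' is not what forces $K_{2,2,1,1}$---rather, four join factors with at least two of size~$\geq 2$). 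If you intend to write this out in full you should consult \cite{alfaval} directly for the precise case analysis.
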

Few is known for graphs with minimum rank and algebraic co-rank at most 3.
In \cite{alfaval1,BGL09}, there were obtained only partial results for the minimum rank and algebraic co-rank at most 3.
And the problem still seems to be far to be completely understod.
And in \cite{alflin,alfavalvaz}, there were characterized the digraphs whose minimum rank, algebraic co-rank and $\mz$ are equal to 1.

A graph $G$ is \textit{forbidden} for $\Gamma^{\mathcal{R}}_{\leq k}$ when $\gamma_\mathcal{R}(G)\geq k+1$.
Let ${\bf Forb}(\Gamma^{\mathcal{R}}_{\leq k})$ be the set of minimal (under induced subgraphs property) forbidden graphs for $\Gamma^\mathcal{R}_{\leq k}$.
A graph $G$ is $\gamma_\mathcal{R}$-{\it critical} if $\gamma_\mathcal{R}(G-v)<\gamma_\mathcal{R}(G)$ for each $v\in V(G)$.
Then, $G\in {\bf Forb}(\Gamma^{\mathcal{R}}_{\leq k})$ if and only if $G$ is $\gamma_\mathcal{R}$-critical such that $\gamma_\mathcal{R}(G-v)\leq k$ and $k<\gamma_\mathcal{R}(G)$ for each $v\in V(G)$.
Therefore $G\in\Gamma^{\mathcal{R}}_{\leq k}$ if and only if $G$ is ${\bf Forb}(\Gamma^{\mathcal{R}}_{\leq k})$-free.
Thus, characterizing ${\bf Forb}(\Gamma^{\mathcal{R}}_{\leq k})$ leads to a characterization of $\Gamma^{\mathcal{R}}_{\leq k}$.

Since $\gamma_\mathbb{Z}(G)\leq\gamma_\mathbb{R}(G)$ for any graph $G$, then we have that $P_4$, $K_{2,2,1,1}$ and $K_5\setminus{P_3}$ are forbidden graphs for $\Gamma^\mathbb{R}_{\leq 2}$.
In fact we have the following.

\begin{lem}\label{lemma:free}
    The graphs $P_4,\ltimes,{\sf dart},K_5\setminus{P_3},\overline{P_2\cup C_4},K_{2,2,2}$ and $K_{2,2,1,1}$ are in ${\bf Forb}(\Gamma^{\mathbb{R}}_{\leq 3})$.
\end{lem}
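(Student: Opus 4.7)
To place each of the seven listed graphs $G$ in ${\bf Forb}(\Gamma^{\mathbb{R}}_{\leq 3})$, I would verify the two criteria for membership isolated in the introduction: $\gamma_{\mathbb{R}}(G) \geq 4$, and $\gamma_{\mathbb{R}}(G-v) \leq 3$ for every $v \in V(G)$. Both conditions reduce to explicit statements about the $4 \times 4$ minors of the generalized Laplacian $L(G, X_G)$, so the core of the work is direct ideal computation in $\mathbb{R}[X_G]$.

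For the lower bound $\gamma_{\mathbb{R}}(G) \geq 4$, I would exhibit, for each $G$, an explicit $\mathbb{R}[X_G]$-combination of $4 \times 4$ minors of $L(G, X_G)$ that equals a nonzero constant, certifying $I_4^{\mathbb{R}}(G, X_G) = \langle 1 \rangle$. The $-1$ entries on edges make this tractable: one chooses four rows and columns whose induced submatrix is (after row/column permutation) block-triangular with a triangular block of $\pm 1$'s on the diagonal. For the upper bound $\gamma_{\mathbb{R}}(G-v) \leq 3$, the strategy is to produce, for every $v$, a real point $\mathbf{a} \in \mathbb{R}^{|V(G-v)|}$ at which all $4 \times 4$ minors of $L(G-v, \mathbf{a})$ vanish, i.e., $\rank L(G-v, \mathbf{a}) \leq 3$; by the variety argument stated just before Figure~\ref{fig2}, $V_{\mathbb{R}}(I_4^{\mathbb{R}}(G-v, X_{G-v})) \neq \emptyset$, so the ideal is non-trivial. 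Five of the seven graphs already lie in ${\bf Forb}(\Gamma^{\mathbb{Z}}_{\leq 2})$ of Theorem~\ref{thm:chacterizationZ2}, and via $\gamma_{\mathbb{Z}}(G) \leq \gamma_{\mathbb{R}}(G)$ this gives a free lower bound of $3$; the two new graphs $\overline{P_2 \cup C_4}$ and $K_{2,2,2}$ require fresh computations, which I would reduce using their large automorphism groups by imposing $x_u = x_{u'}$ whenever $u, u'$ share an orbit, collapsing the parameter count to something manageable.

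\textbf{Main obstacle.} The chief difficulty is that the bound claimed in the lemma is inconsistent with its smallest listed graph. For $P_4$, the ideal $I_4^{\mathbb{R}}(P_4, X_{P_4})$ is principal, generated by $\det L(P_4, X_{P_4}) = x_1 x_2 x_3 x_4 - x_1 x_2 - x_1 x_4 - x_3 x_4 + 1$, a non-unit in $\mathbb{R}[x_1, \ldots, x_4]$. Hence $\gamma_{\mathbb{R}}(P_4) = 3 < 4$, so $P_4 \notin {\bf Forb}(\Gamma^{\mathbb{R}}_{\leq 3})$ and the literal plan cannot succeed. This is almost certainly a typographical slip: the provable statement, and the one consistent with the introduction's stated aim of characterizing $\Gamma^{\mathbb{R}}_{\leq 2}$, is $G \in {\bf Forb}(\Gamma^{\mathbb{R}}_{\leq 2})$. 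With that index correction the plan above goes through verbatim after replacing every occurrence of ``$4$'' by ``$3$'': the lower bound $\gamma_{\mathbb{R}}(G) \geq 3$ is free for the five graphs inherited from Theorem~\ref{thm:chacterizationZ2}, only $\overline{P_2 \cup C_4}$ and $K_{2,2,2}$ demand a new $I_3^{\mathbb{R}}$ computation, and the single-vertex-deletion checks for condition (ii) reduce to already-classified graphs in $\Gamma^{\mathbb{R}}_{\leq 2}$.
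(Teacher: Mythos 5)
You are right that the lemma's index is a typographical slip: the paper's own justification asserts these seven graphs are $\gamma_{\mathbb{R}}$-critical with real algebraic co-rank $3$ (so they belong to ${\bf Forb}(\Gamma^{\mathbb{R}}_{\leq 2})$, as the caption of Figure~\ref{fig2} indicates, and indeed $P_4$ on four vertices cannot satisfy $\gamma_{\mathbb{R}}\geq 4$), so your corrected reading is the intended statement. With that fix, your plan is essentially the paper's proof: the lower bound for the five graphs inherited from Theorem~\ref{thm:chacterizationZ2} via $\gamma_{\mathbb{Z}}(G)\leq\gamma_{\mathbb{R}}(G)$ is exactly the remark preceding the lemma, and the remaining verifications---triviality of $I_3^{\mathbb{R}}$ for $K_{2,2,2}$ and $\overline{P_2\cup C_4}$ together with the vertex-deletion (criticality) checks---are precisely the computer-algebra computations the paper appeals to, cf.\ the Gr\"obner bases in the Example, which become trivial over $\mathbb{R}$.
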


This can be verified by using a Computer Algebra System like {\it Macaulay2}.
More precisely, it can be proved that these graphs are $\gamma_\mathbb{R}$-{\it critical} and their real algebric co-rank is 3.
At this moment it does not imply that these graphs are all the graphs in ${\bf Forb}(\Gamma^{\mathbb{R}}_{\leq 3})$.

\begin{example}
    Let us consider the Gr\"obner bases of the third critical ideal on $\mathbb{Z}$ of $K_{2,2,2}$:
    \[
    I_3^\mathbb{Z}\left(K_{2,2,2},X_{K_{2,2,2}}\right)=\langle x_1, x_2, x_3, x_4, x_5, x_6, 2 \rangle.
    \]
    When we consider this ideal over the real numbers, it becomes trivial.
    Similarly, the Gr\"obner bases of the third critical ideal on $\mathbb{Z}$ of $\overline{P_2\cup C_4}$ is not trivial:
    \[
    I_3^\mathbb{Z}\left(\overline{P_2\cup C_4},X_{\overline{P_2\cup C_4}}\right)=\langle x_1 + 1, x_2 + 1, x_3 + 1, x_4 + 1, x_5, x_6, 2 \rangle,
    \]
    where $v_5$ and $v_6$ are the vertices of degree 4.
    And again, when we consider this ideal over the real numbers, it becomes trivial.
    This is an interesting behaviour that does not happen on the rest of graphs in ${\bf Forb}(\Gamma^{\mathbb{R}}_{\leq 2})$.
\end{example}

We start from the characterization of ${\bf Forb}(\Gamma^{\mathbb{Z}}_{\leq 2})$, and, additionally, the induced subgraphs $K_{2,2,2}$ and $\overline{P_2\cup C_4}$ will be removed.

\begin{lem}\label{lemma:forb}
Let $G$ be a connected graph.
    Then, $G$ is $\{P_4,\ltimes,{\sf dart}, K_5\setminus{P_3}, \overline{P_2\cup C_4}, K_{2,2,2},K_{2,2,1,1}\}$-free if and only if $G$ is isomorphic to an induced subgraph of one of the following graphs: $K_{1,n_1,n_2}$, $\overline{K_{1}}\vee(K_{n_2}+K_{n_3})$ or $\overline{K_{n_1}}\vee(K_{1}+K_{n_3})$.
\end{lem}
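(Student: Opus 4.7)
The plan is to prove the biconditional by leveraging Theorem~\ref{thm:chacterizationZ2} to reduce to a finite case analysis over its two graph families, then to dispatch the two extra forbidden subgraphs $K_{2,2,2}$ and $\overline{P_2\cup C_4}$.

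For the forward direction, the seven-graph family contains the five graphs of Theorem~\ref{thm:chacterizationZ2}, so any connected $G$ satisfying the hypothesis is already isomorphic to either $K_{n_1,n_2,n_3}$ or $\overline{K_{n_1}}\vee(K_{n_2}+K_{n_3})$. In the first case, one checks that $K_{2,2,2}$ is an induced subgraph iff every $n_i\geq 2$, so the $K_{2,2,2}$-free hypothesis forces some $n_i\leq 1$, placing $G$ inside $K_{1,n_j,n_k}$ after reindexing. In the second case, the crucial observation is the identification $\overline{P_2\cup C_4}\cong \overline{K_2}\vee 2K_2$; a short check (the $\overline{K_2}$ pair must lie inside $\overline{K_{n_1}}$, since a non-adjacent pair taken across $K_{n_2}$ and $K_{n_3}$ fails to have four common neighbors inducing $2K_2$, and the two edges of the $2K_2$ must then lie in distinct cliques $K_{n_2}$ and $K_{n_3}$) shows this induced subgraph occurs iff $n_1,n_2,n_3\geq 2$. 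So some $n_i$ must drop to $\leq 1$: if $n_1\leq 1$ then $G$ embeds as an induced subgraph of $\overline{K_1}\vee(K_{n_2}+K_{n_3})$, while if $n_2\leq 1$ or $n_3\leq 1$ then after possibly swapping the two cliques $G$ embeds in $\overline{K_{n_1}}\vee(K_1+K_{n_3})$.

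For the reverse direction, each of the three listed families is a subfamily of Theorem~\ref{thm:chacterizationZ2}(3), hence automatically $\{P_4,\ltimes,{\sf dart},K_5\setminus P_3,K_{2,2,1,1}\}$-free, so it remains only to verify $K_{2,2,2}$-freeness and $\overline{P_2\cup C_4}$-freeness. Using the same decomposition $\overline{P_2\cup C_4}=\overline{K_2}\vee 2K_2$ and the fact that an induced $K_{2,2,2}$ needs three disjoint size-$2$ independent sets, each verification reduces to a short enumeration of where in the ambient structure these independent pairs (respectively the two edges of the $2K_2$) could lie; in each family, either the color class of size $1$ or the clique factor $K_1$ blocks a required pair. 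The main obstacle is the identification $\overline{P_2\cup C_4}\cong\overline{K_2}\vee 2K_2$: without it the induced-subgraph embeddings are awkward to track, but once in hand the rest is a routine matching of color classes versus clique factors.
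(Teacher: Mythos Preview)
Your proof is correct and follows essentially the same approach as the paper: reduce via Theorem~\ref{thm:chacterizationZ2} to the two families $K_{n_1,n_2,n_3}$ and $\overline{K_{n_1}}\vee(K_{n_2}+K_{n_3})$, use the identification $\overline{P_2\cup C_4}\cong\overline{K_2}\vee(K_2+K_2)$ to handle the second family, and then check the two extra forbidden subgraphs in the reverse direction. If anything, your argument is more careful than the paper's: you explicitly justify why the $\overline{K_2}$ pair in an embedded $\overline{P_2\cup C_4}$ must land inside $\overline{K_{n_1}}$, and your reverse direction systematically addresses both $K_{2,2,2}$-freeness and $\overline{P_2\cup C_4}$-freeness for all three target families, whereas the paper's reverse direction only names a subset of these checks.
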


\begin{proof}
    Let $G$ be $\{P_4,\ltimes,{\sf dart}, K_5\setminus{P_3}, K_{2,2,1,1}\}$-free.
    By Theorem~\ref{thm:chacterizationZ2}, we have two cases, either $G$ is isomorphic to $K_{n_1,n_2,n_3}$ or to $\overline{K_{n_1}}\vee(K_{n_2}+K_{n_3})$.
    In the first case, since $K_{2,2,2}$ is forbidden for $G$,  we have that at least one of the $n_1,n_2,n_3$ must be at most 1.
    In the second case, we can observe that $\overline{P_2\cup C_4}$ can be regarded as $\overline{K_{2}}\vee(K_{2}+K_{2})$.
    From which follows that either $n_1\leq 1$ or at least one of $n_2$ and $n_3$ is at most 1.
    The other direction follows since $\overline{P_2\cup C_4}$ is not and induced subgraph of $K_{1,n_2,n_3}$, and $K_{2,2,2}$ is not an induced subgraph of $\overline{K_{1}}\vee(K_{n_2}+K_{n_3})$ nor $\overline{K_{n_1}}\vee(K_{1}+K_{n_3})$.
\end{proof}

It remains to prove that $P_4,\ltimes,{\sf dart},K_5\setminus{P_3},\overline{P_2\cup C_4},K_{2,2,2}$ and $K_{2,2,1,1}$ are in fact all the graphs in ${\bf Forb}(\Gamma^{\mathbb{R}}_{\leq k})$.
This can be done by computing the algebric co-rank of the graphs in $K_{1,n_1,n_2}$, $\overline{K_{1}}\vee(K_{n_2}+K_{n_3})$ and $\overline{K_{n_1}}\vee(K_{1}+K_{n_3})$, and checking that any graph  $G$ in these families has $\gamma_\mathbb{R}(G)\leq2$.
That will be done in the following section.

\section{Blowup graphs}\label{section:blowupgraphs}

Given a graph $G=(V,E)$ and a vector ${\bf d}\in {\mathbb Z}^V$, the graph $G^{\bf d}$ is constructed as follows.
For each vertex $u\in V$, associate a new vertex set $V_u$, where $V_u$ is a clique of cardinality $-{\bf d}_u$ when ${\bf d}_u$ is negative, and $V_u$ is a stable set of cardinality ${\bf d}_u$ if ${\bf d}_u$ when positive.
Each vertex in $V_u$ is adjacent with each vertex in $V_v$ if and only if $u$ and $v$ are adjacent in $G$.
Then the graph $G$ is called the {\it underlying graph} of $G^{\bf d}$.

In general, the computation of the Gr\"{o}bner bases of the critical ideals is more than complicated.
However, we will use a method, developed in \cite{alfacorrval}, to decide, for $i\leq|V(G)|$, whether the $i$-{\it th} critical ideal of $G^{\bf d}$ is trivial or not.

For ${\bf d}\in {\mathbb Z}^{V}$, we define $\phi({\bf d})$ as follows:
\[
\phi({\bf d})_v =
\begin{cases}
0 & \text{ if }{\bf d}_v>1,\\
-1 & \text{ if }{\bf d}_v<-1,\\
x_v & \text{ otherwise }.
\end{cases}
\]

\begin{thm}\cite[Theorem 2.7]{alfacorrval}\label{Theo:trivialcriticalidealsiff}
Let $n\geq 2$ and $G=(V,E)$ be a graph with $n$ vertices.
For $1\leq j\leq n$ and ${\bf d}\in \mathbb{Z}^{V}$,  the critical ideal $I^\mathcal{R}_j(G^{\bf d},X_{G^{\bf d}})$ is trivial if and only if the evaluation of $I^\mathcal{R}_j(G,X_G)$ at $X_G=\phi({\bf d})$ is trivial.
\end{thm}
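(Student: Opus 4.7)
The plan is to exploit the block structure of $L(G^{\bf d}, X_{G^{\bf d}})$ induced by the partition $\{V_u : u \in V(G)\}$ of $V(G^{\bf d})$ and to apply elementary row and column operations --- which preserve the ideal of $j$-minors --- until a copy of $L(G, \phi({\bf d}))$ becomes visible as a distinguished submatrix.

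First I would record the block structure. For $u \neq v$, the $(u,v)$-block of $L(G^{\bf d}, X_{G^{\bf d}})$ is the $|{\bf d}_u| \times |{\bf d}_v|$ zero matrix if $uv \notin E(G)$ and the all-$(-1)$ matrix $-J$ if $uv \in E(G)$; the $(u,u)$-block is $\diag(x_w)_{w \in V_u}$ when ${\bf d}_u > 0$ and $\diag(x_w)_{w \in V_u} - (J - I)$ when ${\bf d}_u < 0$. Vertices inside the same $V_u$ are twins: their rows agree at every column outside $V_u$, and symmetrically for columns. I would then perform a twin reduction in each $V_u$ with $|{\bf d}_u| \geq 2$: pick a representative $w^\star_u \in V_u$ and subtract row $w^\star_u$ from each of the other rows in $V_u$ (and analogously for columns). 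Because the twins shared every external entry with $w^\star_u$, each non-representative row becomes supported entirely inside the $V_u$ block and involves only the variables $\{x_w : w \in V_u\}$. An analogous reduction in the clique case uses the constant $-1$ off-diagonals inside $V_u$.

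After these reductions for all $u$, the matrix becomes block-triangular: the ``transversal'' block indexed by $\{w^\star_u : u \in V(G)\}$ coincides, under the relabeling $w^\star_u \mapsto u$, with $L(G, \phi({\bf d}))$ --- the diagonal at $u$ being $0$ when ${\bf d}_u \geq 2$, $-1$ when ${\bf d}_u \leq -2$, and $x_u$ when $|{\bf d}_u| = 1$, exactly matching the definition of $\phi({\bf d})$. The remaining diagonal blocks are small matrices supported on $V_u \setminus \{w^\star_u\}$ and involving only variables $\{x_w : w \in V_u \setminus \{w^\star_u\}\}$, which are algebraically independent from the variables appearing in the transversal block. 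Consequently any $j$-minor of the reduced matrix factors as a product of a minor of the transversal block with minors of the extras, and the unit $1$ lies in $I_j^{\mathcal{R}}(G^{\bf d}, X_{G^{\bf d}})$ precisely when it already lies in the ideal generated by the $j$-minors of $L(G, \phi({\bf d}))$.

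The main obstacle is the bookkeeping inside the twin-reduction step: one must check that the operations performed within different $V_u$'s do not interfere and, more delicately, that the sizes of the minors match on the two sides --- that a $j$-minor of the blown-up matrix really factors through a $j$-minor of $L(G, \phi({\bf d}))$ (times a suitable monomial in the non-representative variables), rather than a minor of smaller or larger size. The twin structure of each $V_u$ is exactly what makes this possible, since row operations inside $V_u$ only touch columns indexed by $V_u$ and so reductions in different $V_u$'s decouple cleanly; but turning this observation into a rigorous matching of ideals requires a careful case analysis of how many representative versus non-representative rows and columns from each $V_u$ are selected into a given $j$-minor.
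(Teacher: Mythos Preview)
The paper does not prove this theorem; it is quoted verbatim as \cite[Theorem 2.7]{alfacorrval} and used as a black box in the subsequent lemmas. There is therefore no proof in the paper to compare your sketch against.

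On the merits of your outline: the twin-reduction idea is the right one and is indeed what drives the argument in \cite{alfacorrval}, but one concrete claim is wrong. After subtracting row $w^\star_u$ from the other rows of $V_u$ and column $w^\star_u$ from the other columns, neither row $w^\star_u$ nor column $w^\star_u$ has been touched, so the diagonal entry at $(w^\star_u,w^\star_u)$ is still the free variable $x_{w^\star_u}$, not $0$ or $-1$. The transversal block you obtain is $L(G,X^\star)$ with $X^\star=(x_{w^\star_u})_u$, not $L(G,\phi({\bf d}))$. To force the correct diagonal value you need a further step --- for instance, adding one of the already-reduced non-representative rows back into row $w^\star_u$, which turns the diagonal entry into $0$ (stable case) or $-1$ (clique case). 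Even then the matrix is not block-triangular: the $(w^\star_u,\text{non-rep})$ and $(\text{non-rep},w^\star_u)$ blocks remain nonzero, so the ``every $j$-minor factors'' picture you describe does not hold literally. One direction (triviality of the evaluated ideal implies triviality for the blowup) does follow easily, since the corrected transversal block sits inside the reduced matrix and its $j$-minors already lie in $I^{\mathcal R}_j(G^{\bf d},X_{G^{\bf d}})$; the converse is where the real work is, and it requires exactly the careful minor-by-minor analysis you flag as ``the main obstacle'' but do not carry out.
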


Therefore, verifying whether a family of graphs have algebraic co-rank at most $i$ becomes in an evaluation of the $i$-{\it th} critical ideal of the underlying graph of the family.
It might be possible that such a family might be described by an infinite number of underlying graphs.

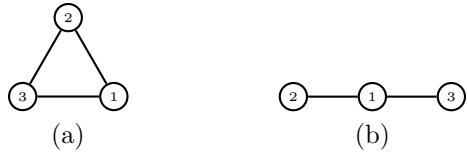
\begin{figure}[h!]
    \begin{center}
    \begin{tabular}{c@{\extracolsep{2cm}}c}
    \begin{tikzpicture}[scale=.7,thick]
	\tikzstyle{every node}=[minimum width=0pt, inner sep=2pt, circle]
	\draw (-30:1) node[draw] (0) {\tiny 1};
	\draw (90:1) node[draw] (1) {\tiny 2};
	\draw (210:1) node[draw] (2) {\tiny 3};
	\draw  (0) edge (1);
    \draw  (1) edge (2);
	\draw  (0) edge (2);
    \end{tikzpicture}
    &
        \begin{tikzpicture}[scale=.7,thick]
	\tikzstyle{every node}=[minimum width=0pt, inner sep=2pt, circle]
	\draw (-1.5,0) node[draw] (0) {\tiny 2};
	\draw (0,0) node[draw] (2) {\tiny 1};
	\draw (1.5,0) node[draw] (4) {\tiny 3};
	\draw  (0) edge (2);
    \draw  (2) edge (4);
    \end{tikzpicture}
        \\
    (a) & (b)
    \end{tabular}
    \end{center}
    \caption{(a) The underlying graph for $K_{1,n_1,n_2}$, and (b) the underlying graph for $\overline{K_{n_1}}\vee(K_{n_2}+K_{n_3})$.}
    \label{fig:blowupgraph}
\end{figure}

\begin{lem}\label{lemma:gamma1}
    Let $G$ be an induced subgraph of $K_{1,n_1,n_2}$, then $\gamma_\mathbb{R}(G)\leq 2$.
\end{lem}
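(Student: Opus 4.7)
The plan is to apply Theorem~\ref{Theo:trivialcriticalidealsiff} with the underlying graph taken to be the triangle $K_3$ of Figure~\ref{fig:blowupgraph}(a), together with the blowup vector ${\bf d}=(1,n_1,n_2)$, whose entries are all positive so that each $V_u$ is a stable set. With this choice $K_3^{\bf d}=K_{1,n_1,n_2}$. By monotonicity of the algebraic co-rank under induced subgraphs, noted at the beginning of Section~\ref{section:graphclasses}, it is enough to show that $\gamma_\mathbb{R}(K_{1,n_1,n_2})\leq 2$ for every $n_1,n_2\geq 1$, which is equivalent to showing that $I^\mathbb{R}_3(K_{1,n_1,n_2},X_{K_{1,n_1,n_2}})$ is non-trivial.

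The first step is to compute $I^\mathbb{R}_3(K_3,X_{K_3})$. Since $K_3$ has only three vertices, this ideal is principal, generated by the single $3\times 3$ minor of the generalized Laplacian $L(K_3,X_{K_3})$. A direct expansion gives a cubic polynomial $f(x_1,x_2,x_3)$ consisting of $x_1x_2x_3$ plus lower-order terms and a nonzero constant.

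The second step is to evaluate $f$ at $\phi({\bf d})$. Since $1\in\{-1,0,1\}$, the definition of $\phi$ gives $\phi({\bf d})_1=x_1$, while $\phi({\bf d})_i$ equals $x_i$ when $n_i=1$ and equals $0$ when $n_i>1$ (for $i=2,3$). Splitting into the four resulting cases, I would verify that in each case the specialization of $f$ remains a non-constant polynomial in the surviving variables; in the extreme case $n_1,n_2>1$ one is left with a linear polynomial in $x_1$ alone. A principal ideal in $\mathbb{R}[X]$ generated by a non-constant polynomial is proper, so the evaluated ideal is non-trivial in every case. Theorem~\ref{Theo:trivialcriticalidealsiff} then yields the non-triviality of $I^\mathbb{R}_3(K_{1,n_1,n_2},X_{K_{1,n_1,n_2}})$, and the lemma follows after invoking monotonicity one last time to pass from $K_{1,n_1,n_2}$ to an arbitrary induced subgraph.

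There is no genuine obstacle in this plan: the only substantive work is the expansion of one $3\times 3$ determinant together with a bookkeeping check of the four specializations. The reason the argument goes through so cleanly is precisely the restriction that one part of the tripartite graph has size one, which keeps the variable $x_1$ alive after applying $\phi$ and prevents the generator from collapsing to a constant (as it would, for instance, in the fully blown up case corresponding to $K_{2,2,2}$).
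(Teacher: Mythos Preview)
Your approach is essentially the same as the paper's: take $K_3$ as the underlying graph, compute the single $3\times 3$ minor $x_1x_2x_3-x_1-x_2-x_3-2$ generating $I^\mathbb{R}_3(K_3,X_{K_3})$, evaluate at $\phi({\bf d})$, and invoke Theorem~\ref{Theo:trivialcriticalidealsiff}. The only cosmetic differences are that you take ${\bf d}=(1,n_1,n_2)$ with positive entries (so that the parts are indeed stable sets, matching $K_{1,n_1,n_2}$) and you carry out the case split on $n_i=1$ versus $n_i>1$ explicitly, whereas the paper records the single evaluation $\phi({\bf d})=(x_1,-1,-1)$ and obtains $\langle 0\rangle$; in your generic case $n_1,n_2>1$ you get $\langle x_1+2\rangle$ instead, which is equally non-trivial over $\mathbb{R}$.
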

\begin{proof}
    The underlying graph $H$ of $K_{1,n_1,n_2}$ is shown in Figure~\ref{fig:blowupgraph}.a.
    We have that
    \[I^{\mathbb{R}}_3(H,X_H)=\langle x_1x_2x_3 - x_1 - x_2 - x_3 - 2\rangle\]
    Let ${\bf d}=(0,-n_1,-n_2)$, and thus $\phi({\bf d})=(x_1,-1,-1)$.
    By evaluating the third critical ideal at $\phi({\bf d})$, we have $I_3(H,\phi({\bf d}))=\langle 0\rangle$.
    By Theorem~\ref{Theo:trivialcriticalidealsiff}, $I^\mathbb{R}_3(H^{\bf d},X_{H^{\bf d}})$ is not trivial and  $\gamma_\mathbb{R}(K_{1,n_1,n_2})\leq 2$.
\end{proof}

\begin{lem}\label{lemma:gamma2}
    Let $G$ be an induced subgraph of $\overline{K_{1}}\vee(K_{n_2}+K_{n_3})$ or $\overline{K_{n_1}}\vee(K_{1}+K_{n_3})$, then $\gamma_\mathbb{R}(G)\leq 2$.
\end{lem}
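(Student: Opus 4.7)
The plan is to mirror Lemma \ref{lemma:gamma1}, using the underlying graph $H$ of both families shown in Figure \ref{fig:blowupgraph}.b: the path on three vertices with middle vertex labelled $1$ (the ``stable'' vertex) and end vertices labelled $2,3$ (the two ``clique'' vertices). Since $|V(H)|=3$, the third critical ideal is principal, generated by $\det L(H,X_H)$, and a direct cofactor expansion yields
\[
I_3^{\mathbb{R}}(H,X_H)=\langle\, x_1x_2x_3 - x_2 - x_3\,\rangle.
\]

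By monotonicity of $\gamma_\mathbb{R}$ under induced subgraphs, it is enough to verify $\gamma_\mathbb{R}(G)\leq 2$ for each full graph in the two families. For $G=\overline{K_1}\vee(K_{n_2}+K_{n_3})$ I would take ${\bf d}=(1,-n_2,-n_3)$; in the generic case $n_2,n_3\geq 2$ we have $\phi({\bf d})=(x_1,-1,-1)$, and the generator evaluates to $x_1+2$, which is not a unit in $\mathbb{R}[x_1]$. For $G=\overline{K_{n_1}}\vee(K_1+K_{n_3})$ I would take ${\bf d}=(n_1,-1,-n_3)$; in the generic case $n_1,n_3\geq 2$ we have $\phi({\bf d})=(0,x_2,-1)$, and the generator evaluates to $1-x_2$, again a non-unit. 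In both situations Theorem \ref{Theo:trivialcriticalidealsiff} then gives that $I_3^\mathbb{R}(G,X_G)$ is non-trivial, so $\gamma_\mathbb{R}(G)\leq 2$.

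The remaining work is case analysis for the boundary values $n_i=1$, where $\phi$ keeps $x_v$ in place rather than replacing it with $0$ or $-1$; note that the two families overlap in the common subfamily $\overline{K_1}\vee(K_1+K_{n_3})$ (obtained by taking $n_2=1$ in the first family or $n_1=1$ in the second). There are only finitely many such boundary shapes, and in each one the evaluation of $x_1x_2x_3-x_2-x_3$ at the corresponding $\phi({\bf d})$ is a non-constant element of $\mathbb{R}[X_G]$, hence a non-unit, so the conclusion again follows from Theorem \ref{Theo:trivialcriticalidealsiff}.

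The main obstacle is therefore only the bookkeeping needed to enumerate the boundary cases and confirm that each resulting polynomial is a non-unit; no new ideas beyond the template of Lemma \ref{lemma:gamma1} are required. Once the lemma is established, combined with Lemma \ref{lemma:gamma1} and Lemma \ref{lemma:forb}, it closes the characterization of $\Gamma^\mathbb{R}_{\leq 2}$ announced in the introduction.
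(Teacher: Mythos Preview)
Your approach is essentially the paper's: take $H=P_3$ as the underlying graph, compute $I_3^{\mathbb{R}}(H,X_H)=\langle x_1x_2x_3-x_2-x_3\rangle$, evaluate at $\phi({\bf d})$ for the two families to obtain the non-unit generators $x_1+2$ and $1-x_2$, and apply Theorem~\ref{Theo:trivialcriticalidealsiff}. Your separate boundary analysis for $n_i=1$ is unnecessary: the monotonicity you already invoke lets you embed each boundary instance as an induced subgraph of a generic one with all parameters $\geq 2$, so the two generic evaluations already cover every case.
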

\begin{proof}
    The underlying graph $H$ of $\overline{K_{n_1}}\vee(K_{n_2}+K_{n_3})$ is shown in Figure~\ref{fig:blowupgraph}.b.
    We have that
    \[I^{\mathbb{R}}_3(H,X_H)=\langle x_1x_2x_3 - x_2 - x_3\rangle\]
    Let ${\bf d}_1=(n_1,0,-n_3)$, and thus $\phi({\bf d}_1)=(0,x_2,-1)$.
    By evaluating the third critical ideal at $\phi({\bf d}_1)$, we have $I_3(H,\phi({\bf d}_1))=\langle -x_2+1\rangle$.
    By Theorem~\ref{Theo:trivialcriticalidealsiff}, $\gamma_\mathbb{R}\left(\overline{K_{n_1}}\vee(K_{1}+K_{n_3})\right)\leq 2$.
    Let ${\bf d}_2=(0,-n_2,-n_3)$, and thus $\phi({\bf d}_2)=(x_1,-1,-1)$.
    By evaluating the third critical ideal at $\phi({\bf d}_2)$, we have $I_3(H,\phi({\bf d}_2))=\langle x_1+2\rangle$.
    By Theorem~\ref{Theo:trivialcriticalidealsiff}, $\gamma_\mathbb{R}\left(\overline{K_{1}}\vee(K_{n_2}+K_{n_3})\right)\leq 2$.
\end{proof}

Lemmas~\ref{lemma:free}, \ref{lemma:forb}, \ref{lemma:gamma1} and \ref{lemma:gamma2} imply our main result.

\begin{thm}
    Let $G$ be a connected graph.
    Then, the following are equivalent.
    \begin{enumerate}
        \item $G\in\Gamma^{\mathbb{R}}_{\leq 2}$,
        \item $G$ is $\{P_4,\ltimes,{\sf dart}, K_5\setminus{P_3}, \overline{P_2\cup C_4}, K_{2,2,2},K_{2,2,1,1}\}$-free,
        \item $G$ is isomorphic to an induced subgraph of one of the following graphs: $K_{1,n_1,n_2}$, $\overline{K_{1}}\vee(K_{n_2}+K_{n_3})$ or $\overline{K_{n_1}}\vee(K_{1}+K_{n_3})$.
    \end{enumerate}
\end{thm}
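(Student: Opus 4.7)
The plan is to prove the three-way equivalence by assembling the four lemmas already established, so no new computation is needed: the theorem is essentially a packaging statement. The skeleton is a cycle $(3)\Rightarrow(1)\Rightarrow(2)\Rightarrow(3)$, with monotonicity of $\gamma_\mathbb{R}$ under induced subgraphs (noted in the introduction) filling the small gaps.

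For $(3)\Rightarrow(1)$, I would cite Lemmas~\ref{lemma:gamma1} and \ref{lemma:gamma2} directly. Applied to the three templates $K_{1,n_1,n_2}$, $\overline{K_1}\vee(K_{n_2}+K_{n_3})$ and $\overline{K_{n_1}}\vee(K_1+K_{n_3})$, they yield $\gamma_\mathbb{R}\leq 2$ in each case; monotonicity then passes this bound to every induced subgraph, placing every such graph in $\Gamma^{\mathbb{R}}_{\leq 2}$.

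For $(1)\Rightarrow(2)$, I would run the standard forbidden-subgraph argument. Suppose $G\in\Gamma^{\mathbb{R}}_{\leq 2}$, and assume for contradiction that some $F$ in the list $\{P_4,\ltimes,{\sf dart},K_5\setminus{P_3},\overline{P_2\cup C_4},K_{2,2,2},K_{2,2,1,1}\}$ appears as an induced subgraph of $G$. Lemma~\ref{lemma:free} gives $\gamma_\mathbb{R}(F)\geq 3$, and monotonicity then forces $\gamma_\mathbb{R}(G)\geq 3$, contradicting $\gamma_\mathbb{R}(G)\leq 2$. Hence $G$ is free of all seven graphs. Finally, $(2)\Rightarrow(3)$ is exactly Lemma~\ref{lemma:forb}, whose proof starts from the integer characterization (Theorem~\ref{thm:chacterizationZ2}) and uses the extra forbidden subgraphs $K_{2,2,2}$ and $\overline{P_2\cup C_4}$ to trim the two parameter families appearing there down to the three templates listed in $(3)$.

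The only genuinely delicate step, and the natural obstacle, is already absorbed into the supporting lemmas: the Gr\"obner basis verification that each of the seven forbidden graphs really has real algebraic co-rank exactly $3$ (in particular that the jump from $\mathbb{Z}$ to $\mathbb{R}$ trivializes the third critical ideal at the appropriate level for $K_{2,2,2}$ and $\overline{P_2\cup C_4}$), together with the evaluations of the third critical ideal at $\phi(\mathbf{d})$ for the three blow-up templates via Theorem~\ref{Theo:trivialcriticalidealsiff}. Given those computations, the theorem itself follows by a purely formal chaining of implications.
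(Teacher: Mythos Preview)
Your proposal is correct and matches the paper's own proof, which simply states that Lemmas~\ref{lemma:free}, \ref{lemma:forb}, \ref{lemma:gamma1} and \ref{lemma:gamma2} imply the result. Your cycle $(3)\Rightarrow(1)\Rightarrow(2)\Rightarrow(3)$ is exactly the intended assembly, and the only extra ingredient you invoke---monotonicity of $\gamma_{\mathbb{R}}$ under induced subgraphs---is already noted in the paper and (for $(3)\Rightarrow(1)$) in fact baked into the statements of Lemmas~\ref{lemma:gamma1} and \ref{lemma:gamma2}.
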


The fact that $K_{2,2,2}$ is an induced subgraph of $K_{3,3,3}$ implies that if $G\in\Gamma^{\mathbb{R}}_{\leq 2}$, then $\mr(G)\leq 2$.
Therefore, if $G$ is a connected graph such that $\mr(G)= 3$, then $\gamma_{\mathbb{R}}(G)\geq3$.
Which implies the following result.

\begin{cor}
    If $G$ is a connected graph such that $\mr(G)= 3$, then $\mr(G)\leq\gamma_{\mathbb{R}}(G)$.
\end{cor}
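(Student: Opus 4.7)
The plan is to show that $\Gamma^{\mathbb{R}}_{\leq 2}$ is contained in the class of graphs with $\mr_{\mathbb{R}}(G)\leq 2$, and then to deduce the corollary by a simple contrapositive. This is essentially a two-step deduction combining the main theorem of the paper with the Barrett--Hall--van der Holst--Loewy classification of graphs with $\mr_{\mathbb{R}}\leq 2$ recalled in Section~\ref{section:graphclasses}.

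First, I would take any connected graph $G\in\Gamma^{\mathbb{R}}_{\leq 2}$. By the main theorem, $G$ is $\{P_4,\ltimes,{\sf dart}, K_5\setminus P_3, \overline{P_2\cup C_4}, K_{2,2,2}, K_{2,2,1,1}\}$-free; in particular, $G$ is $\{P_4,\ltimes,{\sf dart},K_{2,2,2}\}$-free. The key observation is that $K_{2,2,2}$ is an induced subgraph of $K_{3,3,3}$ (pick any two vertices from each of the three parts), so any $K_{2,2,2}$-free graph is automatically $K_{3,3,3}$-free. Hence $G$ is $\{P_4,K_{3,3,3},\ltimes,{\sf dart}\}$-free, and the cited classification yields $\mr_{\mathbb{R}}(G)\leq 2$.

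Next, I would take the contrapositive of what has just been established: if $\mr_{\mathbb{R}}(G)\geq 3$, then $G\notin\Gamma^{\mathbb{R}}_{\leq 2}$, so $\gamma_{\mathbb{R}}(G)\geq 3$. In the case $\mr_{\mathbb{R}}(G)=3$, this gives $\gamma_{\mathbb{R}}(G)\geq 3=\mr_{\mathbb{R}}(G)$, which is exactly the inequality $\mr_{\mathbb{R}}(G)\leq\gamma_{\mathbb{R}}(G)$ claimed by the corollary. There is really no obstacle here, as all the structural work has been absorbed into the main theorem; the corollary is a one-line consequence once one notices the induced containment $K_{2,2,2}\subseteq K_{3,3,3}$.
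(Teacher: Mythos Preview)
Your proposal is correct and follows essentially the same argument as the paper: use the main theorem to see that $\Gamma^{\mathbb{R}}_{\leq 2}$ forbids $K_{2,2,2}$ (hence $K_{3,3,3}$) along with $P_4,\ltimes,{\sf dart}$, deduce $\mr_{\mathbb{R}}\leq 2$ from the Barrett--van der Holst--Loewy classification, and then take the contrapositive. The paper's own justification is exactly this one-line observation about $K_{2,2,2}\subseteq K_{3,3,3}$ placed just before the corollary.
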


\section*{Acknowledgments}
This research was partially supported by SNI and CONACyT.



\begin{thebibliography}{99}
\bibitem{alfacorrval}{C.A.~Alfaro, H.~Corrales and C.E.~Valencia.  Critical ideals of signed graphs with twin vertices.  \emph{Advances in Applied Mathematics}, 86:99--131, 2017.}
\bibitem{alflin}{C.A.~Alfaro, J. C.-H. Lin.  Critical ideals, minimum rank and zero forcing number. preprint arXiv:1710.03386.}
\bibitem{alfaval2} C.A. Alfaro and C.E. Valencia.  On the sandpile group of the cone of a graph.  \emph{Linear Algebra and its Applications}, 436:1154--1176, 2012.
\bibitem{alfaval}{C.A.~Alfaro and C.E.~Valencia.  Graphs with two trivial critical ideals.  \emph{Discrete Applied Mathematics}, 167:33--44, 2014.}
\bibitem{alfaval1}{C.A.~Alfaro and C.E.~Valencia. Small clique number graphs with three trivial critical ideals. preprint arXiv:1311.5927.}
\bibitem{alfavalvaz}{C.A.~Alfaro, C.E.~Valencia, A.~V\'azquez-\'Avila.  Digraphs with at most one trivial critical ideal. \emph{Linear and Multilinear Algebra}, 2017. DOI: http://dx.doi.org/10.1080/03081087.2017.1383349}
\bibitem{AIMZmr}{AIM Minimum Rank -- Special Graphs Work Group (F.~Barioli, W.~Barrett, S.~Butler, S.M.~Cioab\u{a}, D.~Cvetkovi\'c, S.M.~Fallat, C.~Godsil, W.~Haemers, L.~Hogben, R.~Mikkelson, S.~Narayan, O.~Pryporova, I.~Sciriha, W.~So, D.~Stevanovi\'c, H.~van~der~Holst, K.~Vander~Meulen, and A.~Wangsness). Zero forcing sets and the minimum rank of graphs. \emph{Linear Algebra and its Application}, 428:1628--1648, 2008.}
\bibitem{BF07}{F. Barioli and S.M. Fallat.  On the minimum rank of the join of graphs and decomposable graphs.  \emph{Linear Algebra and its Applications}, 421:252--263, 2007.}
\bibitem{BHL04}{W.W. Barrett, H. van der Holst, and R. Loewy.  Graphs whose minimal rank is two.  \emph{Electronic Journal of Linear Algebra}, 11:258--280, 2004.}
\bibitem{BHL05}{W.W. Barrett, H. van der Holst, and R. Loewy.  Graphs whose Minimal Rank is Two: The Finite Fields Case.  \emph{Electronic Journal of Linear Algebra}, 14:32--42, 2005.}
\bibitem{BGL09}{W.W. Barrett, J. Grout, and R. Loewy.  The Minimum Rank Problem over the Finite Field of Order 2: Minimum Rank 3.  \emph{Linear Algebra and its Applications}, 430:890--923, 2009.}
\bibitem{rankring} Wai-Sin Ching.  Linear equations over commutative rings.  \emph{Linear Algebra and its Applications}, 18:257--266, 1977.
\bibitem{corrval}  H. Corrales and C.E. Valencia.  On the critical ideals of graphs.  \emph{Linear Algebra and its Applications}, 439:3870--3892, 2013.
\bibitem{DGHMR09} L.M. DeAlba, J. Grout, L. Hogben, R. Mikkelson, and K. Rasmussen. Universally optimal matrices and field independence of the minimum rank of a graph.  \emph{Electronic Journal of Linear Algebra}, 18:403--419, 2009.
\bibitem{merino}{C. Merino, The chip-firing game, Discrete Math. 302 (2005) 188--210.}
\end{thebibliography}
\end{document}